\numberwithin{equation}{section}
\numberwithin{figure}{section}
  \theoremstyle{plain}
  \newtheorem*{thm*}{\protect\theoremname}
  \theoremstyle{definition}
  \newtheorem*{example*}{\protect\examplename}
\theoremstyle{plain}
\newtheorem{thm}{\protect\theoremname}
  \theoremstyle{plain}
  \newtheorem{prop}[thm]{\protect\propositionname}
  \theoremstyle{plain}
  \newtheorem*{fact*}{\protect\factname}
\DeclareMathOperator{\sgn}{sgn}
\theoremstyle{remark}
\newtheorem*{rems*}{Remarks}
  \providecommand{\examplename}{Example}
  \providecommand{\factname}{Fact}
  \providecommand{\propositionname}{Proposition}
  \providecommand{\theoremname}{Theorem}
\providecommand{\theoremname}{Theorem}
\begin{document}

\title{On the Fourier Expansion of Word Maps}

\author{Ori Parzanchevski and Gili Schul}

\address{{\small Einstein Institute of Mathematics, Hebrew University, Jerusalem}}

\email{parzan@math.huji.ac.il, gili.schul@mail.huji.ac.il}
\begin{abstract}
Frobenius observed that the number of times an element of a finite
group is obtained as a commutator is given by a specific combination
of the irreducible characters of the group. More generally, for any
word $w$ the number of times an element is obtained by substitution
in $w$ is a class function. Thus, it has a presentation as a combination
of irreducible characters, called its Fourier expansion. In this paper
we present formulas regarding the Fourier expansion of words in which
some letters appear twice. These formulas give simple proofs for classical
results, as well as new ones.
\end{abstract}
\maketitle

\section{Introduction}

Let $w$ be a word in $\mathbf{F}_{d}=\mathbf{F}\left(x_{1},\ldots,x_{d}\right)$,
the free group on $d$ generators. Given a finite group $G$, $w$
induces a \emph{word map }from\emph{ }$G^{d}=G\times\cdots\times G$
to $G$, defined by substitution: if $w=\prod_{j=1}^{r}x_{i_{j}}^{\varepsilon_{j}}$
(where $\varepsilon_{j}=\pm1$), then the word map $w:G^{d}\rightarrow G$
is defined by $w\left(g_{1},\ldots,g_{d}\right)=\prod_{j=1}^{r}g_{i_{j}}^{\varepsilon j}$.
For example, for $w=\left[x_{1},x_{2}\right]=x_{1}x_{2}x_{1}^{-1}x_{2}^{-1}$,
$w\left(g,h\right)=ghg^{-1}h^{-1}$. It is a classical question to
understand the distribution which $w$ induces on $G$, i.e., the
function 
\[
N_{w}\left(g\right)=\left|w^{-1}\left(g\right)\right|=\sharp\left\{ \left(g_{1},\ldots,g_{d}\right)\in G^{d}|\ w\left(g_{1},\ldots,g_{d}\right)=g\right\} 
\]
(some authors write $N_{w,G}$ to emphasis that this depends on $G$).
The simple observation that $N_{w}$ is a class function shows that
one can write 
\[
N_{w}=\sum\limits _{\chi\in\mathrm{Irr}\left(G\right)}N_{w}^{\chi}\cdot\chi
\]
for unique $N_{w}^{\chi}\in\mathbb{C}$, where $\mathrm{Irr}\left(G\right)$
are the irreducible characters of $G$. The coefficients $N_{w}^{\chi}$
are called the \emph{Fourier coefficients }of $w$ with respect to
$G$, and their study goes back to Frobenius \cite{frobenius1896gruppencharaktere},
who showed that $N_{\left[x_{1},x_{2}\right]}^{\chi}=\frac{\left|G\right|}{\chi\left(1\right)}$
for any $G$. Since $\chi\left(1\right)$ divides $\left|G\right|$,
this shows in particular that $N_{\left[x_{1},x_{2}\right]}$ is itself
a character of $G$. Several authors have noticed that Frobenius'
argument generalizes to $N_{\left[x_{1},y_{1}\right]\cdot\ldots\cdot\left[x_{k},y_{k}\right]}^{\chi}=\left(\frac{\left|G\right|}{\chi\left(1\right)}\right)^{2k-1}$,
and the Fourier expansions of other words were studied as well (see
\cite[chapter 4]{isaacs1994character}, and more recent results in
\cite{tambour2000number,AV10}).\medskip{}

If $w$ and $w'$ are equivalent under $\mathrm{Aut}\left(\mathbf{F}_{d}\right)$,
then $N_{w}\equiv N_{w'}$ for any $G$, so that one can always replace
$w\in\mathbf{F}_{d}$ by $\varphi\left(w\right)$ for any $\varphi\in\mathrm{Aut}\left(\mathbf{F}_{d}\right)$.
If some letter $x$ appears only once in $w$ (either as $x$ or as
$x^{-1}$, but not both), then $w$ is equivalent under $\mathrm{Aut}\left(\mathbf{F}_{d}\right)$
to the single letter $x$, and thus $N_{w}$ is constant. In this
paper we consider the next simplest case, in which some letter appears
twice in $w$, either with the same sign or with different signs.

We say that $x$ is a \emph{square }in $w$ if $x$ appears twice
in it, and $x^{-1}$ does not, or vice versa.\emph{ }We say that $x$
is \emph{dismissible} in $w$ if both $x$ and $x^{-1}$ appear in
$w$, each of them only once. In this paper we present formulas regarding
the Fourier coefficients of words with at least one square or dismissible
letter. The results in \cite{frobenius1896gruppencharaktere,tambour2000number,AV10}
are obtained as special cases of these formulas, and further cases
and applications are shown in Section \ref{sec:Applications-of-the}.

\subsection{Statement of the results}

It is well known that for any word $w\in\mathbf{F}_{d}$ and finite
group $G$, the Fourier coefficients of $w$ with respect to $G$
can be obtained by summation over all substitutions of $G$'s elements
in $w$'s letters:
\begin{equation}
N_{w}^{\chi}=\frac{1}{\left|G\right|}\sum_{\overline{x}\in G^{d}}\overline{\chi}\left(w\left(x_{1},\ldots,x_{d}\right)\right),\label{eq:inversion-formula}
\end{equation}
where $\overline{x}$ stands for $\left(x_{1},\ldots,x_{d}\right)$
(for the proof see Proposition \ref{prop:misc}). We will show the
following:
\begin{thm*}
Let $w$ be a word in \textup{$\mathbf{F}_{d}$, $G$ a finite group,
and $\chi\in\mathrm{Irr}\left(G\right)$.}
\begin{enumerate}
\item There is a formula for $N_{w}^{\chi}$ which involves summation only
over the non-dismissible letters of $w$. 
\item There is a formula for $N_{w}^{\chi}$ which involves the \emph{Frobenius-Schur
indicator }of $\chi$ (\ref{eq:FS_indicator}), and summation only
over the non-dismissible and non-square letters of $w$. 
\end{enumerate}
\end{thm*}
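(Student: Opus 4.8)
The plan is to prove both parts by directly manipulating the inversion formula (\ref{eq:inversion-formula}), exploiting the fact that summing a matrix coefficient of an irreducible representation over the whole group collapses onto a projection. Let me set up the key tool first. If $\rho$ is a representation affording $\chi$, then for any fixed elements $a,b\in G$ we have, by Schur orthogonality, the averaging identity
\begin{equation}
\frac{1}{\left|G\right|}\sum_{x\in G}\rho\left(a\,x\,b\,x^{-1}\right)=\frac{\chi\left(ab\right)}{\chi\left(1\right)}\cdot\mathrm{Id},\qquad\frac{1}{\left|G\right|}\sum_{x\in G}\rho\left(a\,x\,b\,x\right)=\frac{1}{\chi\left(1\right)}\,\rho\left(a\right)\,J_{\rho}\,\rho\left(b\right)\,J_{\rho},\nonumber
\end{equation}
where the first identity handles a dismissible letter $x$ (one occurrence of $x$, one of $x^{-1}$, with arbitrary words $a,b$ sitting between them) and the second handles a square letter. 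Here $J_{\rho}$ is the intertwiner coming from the Frobenius--Schur theory that relates $\rho$ to its dual; its existence and properties (it is nonzero exactly when the indicator is nonzero, and $J_{\rho}^{2}$ is a scalar) are what feed into part (2). The first identity is the engine for part (1): each dismissible letter, when summed out, contributes a scalar factor $\chi\left(ab\right)/\chi\left(1\right)$ and glues the two neighbouring segments together, thereby removing that letter from the remaining sum entirely.

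For part (1), the strategy is an induction on the number of dismissible letters. First I would rewrite $\overline{\chi}\left(w\right)$ in (\ref{eq:inversion-formula}) as $\frac{1}{\chi(1)}$ times the trace of $\overline{\rho}$ evaluated on $w$, or equivalently work with $\rho$ and $\overline{\chi}$ directly. Picking one dismissible letter $x$, I write $w=a\,x\,b\,x^{-1}$ up to a cyclic rotation (permissible since $\chi$ is a class function, so $N_{w}^{\chi}$ is invariant under cyclic permutation of $w$), where $a,b$ are subwords not involving $x$. Carrying out the sum over $x\in G$ using the first averaging identity produces a factor and reduces $w$ to a shorter word in which $x$ no longer appears; the main bookkeeping point is that the two segments flanking $x$ and $x^{-1}$ get concatenated correctly, and that this is compatible with taking the trace. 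Iterating over all dismissible letters yields a closed expression summed only over the non-dismissible letters, which is the claim.

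For part (2), I would further eliminate the square letters using the second averaging identity, which is where the Frobenius--Schur indicator enters. After stripping all dismissible letters via part (1), I take a square letter $x$, cyclically rotate so that $w=a\,x\,b\,x$ (both occurrences with the same sign), and apply the square-averaging identity; the appearance of $J_{\rho}$ and of $J_{\rho}^{2}=\nu(\chi)\cdot\mathrm{Id}$, with $\nu(\chi)$ the Frobenius--Schur indicator, is exactly what converts the sum into indicator-weighted terms over the remaining letters. The hard part will be the square case rather than the dismissible case: unlike the dismissible identity, the square identity does not simply scalarize the contribution but inserts the fixed operator $J_{\rho}$, so one must track these insertions carefully as several square letters are eliminated in succession, making sure the resulting operator products close up into traces that can be re-expressed through $\chi$ and $\nu(\chi)$. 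Verifying that the indicator factors combine correctly—and that the procedure is independent of the order in which letters are removed and of the chosen cyclic rotations—is the principal obstacle; I expect this to require a clean normal-form argument for the word together with the multiplicativity properties of $J_{\rho}$ under the representation.
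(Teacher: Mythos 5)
Your part (1) rests on a false identity, and this is fatal rather than cosmetic. By Schur's lemma, for an irreducible $\rho$ affording $\chi$ one has $\frac{1}{\left|G\right|}\sum_{x\in G}\rho\left(x\right)\rho\left(b\right)\rho\left(x\right)^{-1}=\frac{\chi\left(b\right)}{\chi\left(1\right)}\mathrm{Id}$, hence
\[
\frac{1}{\left|G\right|}\sum_{x\in G}\rho\left(a\,x\,b\,x^{-1}\right)=\frac{\chi\left(b\right)}{\chi\left(1\right)}\,\rho\left(a\right),
\]
not $\frac{\chi\left(ab\right)}{\chi\left(1\right)}\mathrm{Id}$. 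So summing out a dismissible letter does not ``glue'' the flanking segments into one word; on the level of traces it \emph{splits} them: $\frac{1}{\left|G\right|}\sum_{x}\chi\left(axbx^{-1}\right)=\chi\left(a\right)\chi\left(b\right)/\chi\left(1\right)$. The commutator already refutes your version: for $w=\left[x,y\right]$, summing over $y$ with your identity gives $N_{w}^{\chi}=\left|G\right|\chi\left(1\right)$, whereas Frobenius' value is $\left|G\right|/\chi\left(1\right)$ (which the splitting identity reproduces). The error matters because the split/merge combinatorics \emph{is} the content of part (1): when you iterate, the two occurrences of the next dismissible letter may lie in the same trace factor (summing splits it) or in two different trace factors (summing merges them), and the final answer is a product of $r$ traces $\overline{\chi}\left(W_{1}\right)\cdots\overline{\chi}\left(W_{r}\right)$ as in (\ref{eq:formula}), with $r$ and the $W_{i}$ determined by the interleaving pattern of the dismissible letters --- in the paper, by the cycle decomposition of the permutation $\sigma$ of Theorem \ref{thm:the-theorem}, obtained by expanding all matrix coefficients and applying Peter--Weyl (\ref{eq:lemma_delta}) to all dismissible letters at once. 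An induction whose elimination step always produces a single word can never yield several trace factors, so it cannot close.

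Part (2) as written is also not a proof: you flag the bookkeeping of the $J_{\rho}$ insertions as ``the principal obstacle'' and leave it unresolved, and the identity itself needs the dual representation in the right place ($J_{\rho}$ intertwines $\rho$ with $\overline{\rho}$, so what arises is an expression of the form $\rho\left(a\right)J_{\rho}\overline{\rho\left(b\right)}J_{\rho}^{-1}$, not $\rho\left(a\right)J_{\rho}\rho\left(b\right)J_{\rho}$). You also propose removing squares \emph{after} dismissible letters; but then the two occurrences of a square can land in different trace factors, where your rotation to $axbx$ is unavailable --- the paper notes explicitly that squares are better handled first. The paper's own route needs none of this machinery: cyclically shift $w_{1}yw_{2}yw_{3}$ to $yw_{2}yw_{3}w_{1}$, apply the free-group automorphism $y\mapsto yw_{2}^{-1}$ to reach the equivalent word $y^{2}w_{2}^{-1}w_{3}w_{1}$, in which $y^{2}$ has letters disjoint from the rest, and conclude from the disjoint-convolution identity (\ref{eq:conv-disj}) together with the classical fact $N_{y^{2}}^{\chi}=\mathcal{FS}_{\chi}$. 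Each application removes one square letter outright, with no operator bookkeeping, and part (2) follows by iterating this and then invoking part (1).
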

The Frobenius-Schur indicator of $\chi$ is 
\begin{equation}
\mathcal{FS}_{\chi}=\begin{cases}
1 & \chi\:\mathrm{is\: afforded\: by\: a\: real\: representation}\\
-1 & \chi\:\mathrm{is\: real\: but\: not\: afforded\: by\: a\: real\: representation}\\
0 & \chi\:\mathrm{is\: not\: real,}
\end{cases}\label{eq:FS_indicator}
\end{equation}
and it is a classical fact that $\mathcal{FS}_{\chi}=N_{x^{2}}^{\chi}$
(see e.g. \cite[Proposition 39]{serre1977linear}).

The formulas obtained are the following:
\begin{enumerate}
\item \textbf{Dismissible letters:} Let $w\in\mathbf{F}\left(x_{1},\ldots,x_{d},y_{1},\ldots,y_{n}\right)$
be a word in which $y_{1},\ldots,y_{n}$ are dismissible letters.
Define words $w_{0},\ldots,w_{2n}\in\mathbf{F}\left(x_{1},\ldots,x_{d}\right)$
by writing $w$ as 
\[
w=w_{0}\negmedspace\cdot\negmedspace y_{i_{1}}^{\pm1}\negmedspace\cdot\negmedspace w_{1}\negmedspace\cdot\negmedspace y_{i_{2}}^{\pm1}\!\cdot\!\ldots\!\cdot\! w_{2n-1}\negmedspace\cdot\negmedspace y_{i_{2n}}^{\pm1}\negmedspace\cdot\negmedspace w_{2n},
\]
where any $w_{j}$ may be the empty word $1$. We define words $W_{i}\in\mathbf{F}\left(x_{1},\ldots,x_{d}\right)$
as follows. For $W_{1}$, start reading $w$ from $w_{0}$, and upon
reaching a dismissible letter, jump to its inverse, and continue reading
from there. When reaching the end of $w$, stop reading - the word
obtained so far is $W_{1}$. Now, find the first $w_{j}$ which has
not been read yet, and start reading $W_{2}$ from it, again jumping
from dismissible letters to their inverse. You finish reading $W_{2}$
upon reaching your starting point. Continuing in this manner, one
obtains $W_{1},\ldots,W_{r}$ (where $r=r\left(w\right)$) - the process
stops when all $w_{j}$ have been read. Our formula then states that
the Fourier expansion of $w$ is given by 
\begin{equation}
N_{w}^{\chi}=\frac{\left|G\right|^{n-1}}{\chi\left(1\right)^{n}}\sum_{\overline{x}\in G^{d}}\overline{\chi}\left(W_{1}\left(\overline{x}\right)\right)\cdot\ldots\cdot\overline{\chi}\left(W_{r}\left(\overline{x}\right)\right).\label{eq:formula}
\end{equation}
It may happen that $d=0$ (if all letters are dismissible), in which
case the summation is over the empty substitution, and 
\begin{equation}
N_{w}^{\chi}=\frac{\left|G\right|^{n-1}}{\chi\left(1\right)^{n-r}}.\label{eq:fomula_empty}
\end{equation}

\item \textbf{Squares: }Let $w=w_{1}\negmedspace\cdot\negmedspace y\negmedspace\cdot\negmedspace w_{2}\negmedspace\cdot\negmedspace y\negmedspace\cdot\negmedspace w_{3}\in\mathbf{F}\left(x_{1},\ldots,x_{d},y\right)$
where $y$ does not appear in $w_{1},w_{2},w_{3}\in\mathbf{F}\left(x_{1},\ldots,x_{d}\right)$.
Then
\[
N_{w}^{\chi}=\frac{\left|G\right|}{\chi\left(1\right)}\mathcal{FS}_{\chi}N_{w_{1}w_{2}^{-1}w_{3}}^{\chi},
\]
where in $N_{w_{1}w_{2}^{-1}w_{3}}^{\chi}$ we refer to $w_{1}w_{2}^{-1}w_{3}$
as a word in \textbf{$\mathbf{F}\left(x_{1},\ldots,x_{d}\right)$.}\end{enumerate}
\begin{example*}
To demonstrate the formula for dismissible letters, let 
\[
w=\underbrace{x_{1}}_{w_{0}}y_{1}\underbrace{x_{1}x_{2}}_{w_{1}}y_{3}\underbrace{x_{2}x_{1}}_{w_{2}}y_{1}^{-1}\underbrace{x_{1}^{3}}_{w_{3}}y_{2}\underbrace{x_{3}^{-1}}_{w_{4}}y_{3}^{-1}\underbrace{x_{3}^{2}}_{w_{5}}y_{2}^{-1}\underbrace{x_{3}}_{w_{6}}.
\]
We obtain 
\begin{align*}
W_{1} & =w_{0}w_{3}w_{6}=x_{1}^{4}x_{3}\\
W_{2} & =w_{1}w_{5}w_{4}w_{2}=x_{1}x_{2}x_{3}x_{2}x_{1}.
\end{align*}
and thus
\[
N_{w}^{\chi}=\frac{\left|G\right|^{2}}{\chi\left(1\right)^{3}}\sum_{x_{1},x_{2},x_{3}\in G}\overline{\chi}\left(x_{1}^{4}x_{3}\right)\overline{\chi}\left(x_{1}x_{2}x_{3}x_{2}x_{1}\right).
\]

\medskip{}

In the next section we demonstrate some applications of the formulas,
including the results referred to earlier. Section \ref{sec:Preliminaries}
presents some preparatory results for the main theorems, which are
proved in Section \ref{sec:Proof-of-the}.
\end{example*}
\begin{rems*}$ $
\begin{enumerate}
\item The ambient free group of $w$ affects $N_{w}$. For example, for
$w=xy\in F\left(x,y\right)$ we have $N_{w}\equiv\left|G\right|$
(for every finite $G$), but for $w=xy\in F\left(x,y,z\right)$ we
have $N_{w}\equiv\left|G\right|^{2}$. 
\item For $1$ the empty word on zero letters, we have $N_{1}\left(g\right)=\delta_{g,e}$
and thus $N_{1}^{\chi}=\frac{\chi\left(1\right)}{\left|G\right|}$.
\item In order to keep things tidy we shall occasionally write $w$ for
$w\left(x_{1},\ldots,x_{d}\right)$, where $x_{1},\ldots,x_{d}$ are
elements of $G$. For example, we write (\ref{eq:formula}) as 
\[
N_{w}^{\chi}=\frac{\left|G\right|^{n-1}}{\chi\left(1\right)^{n}}\sum_{\overline{x}\in G^{d}}\overline{\chi}\left(W_{1}\right)\cdot\ldots\cdot\overline{\chi}\left(W_{r}\right).
\]

\end{enumerate}
\end{rems*}\medskip{}

\subsection*{Acknowledgements }

We would like to thank our advisors Aner Shalev and Alex Lubotzky
for their support, and Doron Puder and Zlil Sela for valuable discussions.
This research was supported by an ERC Advanced Grant.

\section{\label{sec:Applications-of-the}Corollaries and applications}

We give now examples of applications and corollaries of the main formulas. 
\begin{enumerate}
\item \label{enu:Frobenius-formula}Frobenius' formula: for the commutator
\[
w=\left[y_{1},y_{2}\right]=y_{1}y_{2}y_{1}^{-1}y_{2}^{-1}=w_{0}y_{1}w_{1}y_{2}w_{2}y_{1}^{-1}w_{3}y_{2}^{-1}w_{4}
\]
we have $d=0$, $n=2$, and $w_{0},\ldots,w_{4}=1$. This gives $W_{1}=w_{0}w_{3}w_{2}w_{1}w_{4}=1$,
so that $r=1$, and thus by (\ref{eq:fomula_empty}) 
\[
N_{\left[y_{1},y_{2}\right]}^{\chi}=\frac{\left|G\right|}{\chi\left(1\right)}.
\]
More generally, $w=\left[y_{1},y_{2}\right]\cdot\ldots\cdot\left[y_{2k-1},y_{2k}\right]$
gives $d=0$, $n=2k$, $r=1$, and $W_{1}=1$, hence 
\[
N_{\left[y_{1},y_{2}\right]\cdot\ldots\cdot\left[y_{2k-1},y_{2k}\right]}^{\chi}=\left(\frac{\left|G\right|}{\chi\left(1\right)}\right)^{2k-1}
\]
(which can also be deduced from (\ref{eq:conv-disj})).
\item Let $w_{0},w_{1}$ be words in $\mathbf{F}\left(x_{1},\ldots,x_{d}\right)$
(not necessarily disjoint), and observe $w=w_{0}yw_{1}y^{-1}$. In
this case $W_{1}=w_{0}$ and $W_{2}=w_{1}$, which gives 
\[
N_{w_{0}yw_{1}y^{-1}}^{\chi}=\frac{1}{\chi\left(1\right)}\sum_{\overline{x}\in G^{d}}\overline{\chi}\left(w_{0}\right)\overline{\chi}\left(w_{1}\right).
\]

\item \label{enu:wyw-1y-1}For $w\in\mathbf{F}_{d}$, taking $w_{0}=w$,
$w_{1}=w^{-1}$ in the previous example gives 
\begin{align}
N_{\left[w,y\right]}^{\chi} & =\frac{1}{\chi\left(1\right)}\sum_{\overline{x}\in G^{d}}\overline{\chi}\left(w\right)\chi\left(w\right)=\frac{1}{\left|\chi\left(1\right)\right|}\sum_{g\in G}N_{w}\left(g\right)\overline{\chi}\left(g\right)\chi\left(g\right)\nonumber \\
 & =\frac{\left|G\right|}{\chi\left(1\right)}\left\langle N_{w}\chi,\chi\right\rangle .\label{eq:[w,y]-from-formula}
\end{align}
In terms of the Fourier expansion of $N_{w}$, we obtain 
\[
N_{\left[w,y\right]}^{\chi}=\frac{\left|G\right|}{\chi\left(1\right)}\sum_{\psi\in\mathrm{Irr}\left(G\right)}\left\langle \psi\chi,\chi\right\rangle N_{w}^{\psi},
\]
which is (roughly) Lemma 4.6 in \cite{AV10}. We shall return to this
example in Proposition \ref{prop:[w,z]}.
\item Similarly, for $w_{0}=w_{1}=w$ we obtain 
\begin{equation}
N_{wywy^{-1}}^{\chi}=\frac{1}{\left|\chi\left(1\right)\right|}\sum_{g\in G}N_{w}\left(g\right)\chi\left(g\right)^{2}=\frac{\left|G\right|}{\chi\left(1\right)}\left\langle N_{w}\chi,\overline{\chi}\right\rangle .\label{eq:wywy^-1}
\end{equation}

\item If $W_{1},\ldots,W_{r}=1$, then 
\begin{equation}
N_{w}^{\chi}=\frac{\left|G\right|^{n-1}}{\chi\left(1\right)^{n}}\sum_{\overline{x}\in G^{d}}\overline{\chi}\left(W_{1}\right)\cdot\ldots\cdot\overline{\chi}\left(W_{r}\right)=\frac{\left|G\right|^{n+d-1}}{\chi\left(1\right)^{n-r}}\label{eq:admissible}
\end{equation}
and the only question is what is $r$. In this case one can show that
$w$ is equivalent under $\mathrm{Aut}\left(\mathbf{F}_{d}\right)$
to a word $w'$ in which all letters are dismissible (such a word
is called \emph{admissible}). It is a classical observation that admissible
words describe a relation defining the fundamental group of an orientable
surface. Thus, $w'$ is equivalent under $\mathrm{Aut}\left(\mathbf{F}_{d}\right)$
to a product of $g$ disjoint commutators, where $g$ is the genus
of the surface it defines. Finding $r$ amounts to finding this genus,
as $g=\frac{n-r+1}{2}$.
\item For example, for the admissible words $w=y_{1}y_{2}\ldots y_{n}y_{1}^{-1}y_{2}^{-1}\ldots y_{n}^{-1}$
we obtain a result from \cite{tambour2000number}: for $n$ odd we
have 
\begin{align*}
W_{1} & =w_{0}w_{n+1}w_{2}w_{n+3}w_{4}\ldots w_{n-1}w_{2n}=1\\
W_{2} & =w_{1}w_{n+2}w_{3}w_{n+4}\ldots w_{n-2}w_{2n-1}w_{n}=1
\end{align*}
so that $r=2$ and $N_{w}^{\chi}=\frac{\left|G\right|^{n-1}}{\chi\left(1\right)^{n-2}}.$
For $n$ even, 
\[
W_{1}=w_{0}w_{n+1}w_{2}w_{n+3}w_{4}\ldots w_{2n-1}w_{n}w_{1}w_{n+2}w_{3}w_{n+4}\ldots w_{n-2}w_{2n-1}w_{n}=1
\]
covers all $w_{i}$, hence $r=1$ and $N_{w}^{\chi}=\frac{\left|G\right|^{n-1}}{\chi\left(1\right)^{n-1}}$.
\item Combining (\ref{enu:wyw-1y-1}) and (\ref{enu:Frobenius-formula})
gives a formula for $w=\left[\left[x,y\right],z\right]$:
\[
N_{\left[\left[x,y\right],z\right]}^{\chi}=\frac{\left|G\right|}{\chi\left(1\right)}\sum_{\psi\in\mathrm{Irr}\left(G\right)}\left\langle \psi\chi,\chi\right\rangle N_{\left[x,y\right]}^{\psi}=\frac{\left|G\right|^{2}}{\chi\left(1\right)}\sum\limits _{\psi\in\mathrm{Irr}\left(G\right)}\frac{\left\langle \psi\chi,\chi\right\rangle }{\psi\left(1\right)}.
\]
This formula was already obtained, using different methods, by Amit
\& Vishne \cite{AV10}. In particular it implies that knowing the
decomposition of tensor product of irreducible representation of $G$
gives the Fourier expansion of $\left[\left[x,y\right],z\right]$.%
\footnote{If this decomposition is given by $\chi_{i}\cdot\chi_{j}=\sum c_{i,j,k}\cdot\chi_{k}$,
then $c_{i,j,k}$ are the \emph{Clebsch\textendash{}Gordan coefficients
}of $G$. For $G=S_{n}$ they are also referred to as \emph{Kronecker
coefficients}, and for $G=\mathrm{GL}_{n}$ as \emph{Littlewood\textendash{}Richardson
coefficients}.%
} Continuing by induction gives the coefficients of $\left[\left[\left[x_{1},x_{2}\right],x_{3}\right],\ldots,x_{n}\right]$,
which can be presented in terms of matrix exponentiation (see \cite{AV10}).
\item Nested and non-nested squares: if $w=w_{1}\negmedspace\cdot\negmedspace x\negmedspace\cdot\negmedspace w_{2}\negmedspace\cdot\negmedspace x\negmedspace\cdot\negmedspace w_{3}\negmedspace\cdot\negmedspace y\negmedspace\cdot\negmedspace w_{4}\negmedspace\cdot\negmedspace y\negmedspace\cdot\negmedspace w_{5}$
where the $w_{i}$ do not contain $x$ and $y$ (but are not necessarily
disjoint), then 
\begin{align*}
N_{w}^{\chi} & =\frac{\left|G\right|}{\chi\left(1\right)}\cdot\mathcal{FS}_{\chi}\cdot N_{w_{1}xw_{2}xw_{3}w_{4}^{-1}w_{5}}^{\chi}=\frac{\left|G\right|^{2}}{\chi\left(1\right)^{2}}\cdot\mathcal{FS}_{\chi}^{2}\cdot N_{w_{1}w_{2}^{-1}w_{3}w_{4}^{-1}w_{5}}^{\chi}\\
 & =\frac{\left|G\right|^{2}}{\chi\left(1\right)^{2}}\cdot\delta_{\chi\in\mathbb{R}}\cdot N_{w_{1}w_{2}^{-1}w_{3}w_{4}^{-1}w_{5}}^{\chi}
\end{align*}
where $\delta_{\chi\in\mathbb{R}}$ is one if $\chi$ is real and
zero otherwise. For $w'=w_{1}\negmedspace\cdot\negmedspace x\negmedspace\cdot\negmedspace w_{2}\negmedspace\cdot\negmedspace y\negmedspace\cdot\negmedspace w_{3}\negmedspace\cdot\negmedspace x\negmedspace\cdot\negmedspace w_{4}\negmedspace\cdot\negmedspace y\negmedspace\cdot\negmedspace w_{5}$,
however, we obtain 
\[
N_{w'}^{\chi}=\frac{\left|G\right|}{\chi\left(1\right)}\cdot\mathcal{FS}_{\chi}\cdot N_{w_{1}xw_{2}w_{4}^{-1}x^{-1}w_{3}^{-1}w_{5}}^{\chi}
\]
in which $x$ became dismissible, and by (\ref{eq:formula})
\[
N_{w'}^{\chi}=\frac{\left|G\right|}{\chi\left(1\right)^{2}}\cdot\mathcal{FS}_{\chi}\sum_{\overline{x}\in G^{d}}\overline{\chi}\left(w_{1}w_{3}^{-1}w_{5}\right)\overline{\chi}\left(w_{2}w_{4}^{-1}\right).
\]
Had we taken $w''=w_{1}\negmedspace\cdot\negmedspace x\negmedspace\cdot\negmedspace w_{2}\negmedspace\cdot\negmedspace y\negmedspace\cdot\negmedspace w_{3}\negmedspace\cdot\negmedspace x^{-1}\negmedspace\cdot\negmedspace w_{4}\negmedspace\cdot\negmedspace y\negmedspace\cdot\negmedspace w_{5}$,
we would get 
\[
N_{w''}^{\chi}=\frac{\left|G\right|}{\chi\left(1\right)}\cdot\mathcal{FS}_{\chi}\cdot N_{w_{1}xw_{2}w_{4}^{-1}xw_{3}^{-1}w_{5}}^{\chi},
\]
i.e. $x$ became a square, and now
\begin{equation}
N_{w''}^{\chi}=\frac{\left|G\right|^{2}}{\chi\left(1\right)^{2}}\cdot\delta_{\chi\in\mathbb{R}}\cdot N_{w_{1}w_{4}w_{2}^{-1}w_{3}^{-1}w_{5}}^{\chi}.\label{eq:square_dismissible}
\end{equation}
Notice that had we applied (\ref{eq:formula}) to $w''$, we would
have get 
\[
N_{w''}^{\chi}=\frac{1}{\chi\left(1\right)}\sum_{{\overline{x}\in G^{d}\atop y\in G}}\overline{\chi}\left(w_{1}w_{4}yw_{5}\right)\overline{\chi}\left(w_{2}yw_{3}\right)
\]
which requires summation over more substitutions then (\ref{eq:square_dismissible}).
In general, squares are better taken care of before dismissible letters.
\item Denote $\left\{ x,y\right\} \overset{{\scriptscriptstyle def}}{=}xyxy^{-1}$.
By (\ref{eq:wywy^-1}), or by (\ref{eq:square_dismissible}), we have
$N_{\left\{ x,y\right\} }^{\chi}=\frac{\left|G\right|}{\chi\left(1\right)}\cdot\delta_{\chi\in\mathbb{R}}$.
Comparing this with $N_{\left[x,y\right]}^{\chi}=\frac{\left|G\right|}{\chi\left(1\right)}$
we see that $N_{\left\{ x,y\right\} }\equiv N_{\left[x,y\right]}$
iff all character of $G$ are real. This happens iff every element
of $g$ is conjugate to its inverse, and it is an easy exercise that
this in itself is equivalent to $N_{\left\{ x,y\right\} }\equiv N_{\left[x,y\right]}$.
However, when considering $w=\prod_{i=1}^{k}\left[x_{i},y_{i}\right]$
and $w'=\prod_{i=1}^{k}\left\{ x_{i},y_{i}\right\} $, we have 
\[
N_{w}^{\chi}=\left(\frac{\left|G\right|}{\chi\left(1\right)}\right)^{2k-1}\qquad\mathrm{and}\qquad N_{w'}^{\chi}=\left(\frac{\left|G\right|}{\chi\left(1\right)}\right)^{2k-1}\cdot\delta_{\chi\in\mathbb{R}}
\]
(either by computing directly or by (\ref{eq:conv-disj})). This shows
that once more $N_{w}\equiv N_{w'}$ iff all characters are real,
and this does not seem to be as simple to show directly.
\item Similarly, applying (\ref{eq:formula}) to $w=\left[a,b\right]d\left[a,c\right]d^{-1}$
and $w'=\left\{ a,b\right\} d\left\{ a,c\right\} d^{-1}$ gives
\[
N_{w}^{\chi}=\frac{\left|G\right|^{2}}{\chi\left(1\right)^{3}}\sum_{g\in G}\left|\chi\left(g\right)\right|^{4},\quad N_{w'}^{\chi}=\frac{\left|G\right|^{2}}{\chi\left(1\right)^{3}}\sum_{g\in G}\left(\chi\left(g\right)\right)^{4}
\]
which shows that $N_{w}\equiv N_{w'}$ iff $\chi\left(g\right)\in\mathbb{R}\cup i\mathbb{R}$
for every $\chi\in\mathrm{Irr}\left(G\right)$ and $g\in G$. We leave
it as an exercise to find, for any $k\in\mathbb{N}$, words $w,w'$
which induce the same distribution on $G$ iff $\chi\left(g\right)^{k}\in\mathbb{R}$
for all $\chi\in\mathrm{Irr}\left(G\right)$ and $g\in G$.
\item For a word with dismissible letters for which $r\left(w\right)=1$,
we obtain 
\[
N_{w}^{\chi}=\frac{\left|G\right|^{n-1}}{\chi\left(1\right)^{n}}\sum_{\overline{x}\in G^{d}}\overline{\chi}\left(W_{1}\right)=\frac{\left|G\right|^{n}}{\chi\left(1\right)^{n}}N_{W_{1}}^{\chi}.
\]
Since $r=1$, $n$ is even (see Theorem \ref{thm:the-theorem}), and
we obtain by (\ref{eq:conv-disj}) 
\[
N_{w}^{\chi}=N_{\left[y_{1},y_{2}\right]\cdot\ldots\cdot\left[y_{n-1},y_{n}\right]\cdot W_{1}}^{\chi}.
\]
In fact, one can show that $w$ and $\left[y_{1},y_{2}\right]\cdot\ldots\cdot\left[y_{n-1},y_{n}\right]\cdot W_{1}$
are equivalent under $\mathrm{Aut}\left(\mathbf{F}_{d}\right)$. 
\item More generally, the formula (\ref{eq:formula}) can be interpreted
in the context of tuples of word maps, as follows: for $w_{1},\ldots,w_{r}\in\mathbf{F}_{d}$,
we observe the tuple $\overset{{\scriptscriptstyle \rightarrow}}{\boldsymbol{w}}=\left(w_{1},\ldots,w_{r}\right)$
as a word map $\overset{{\scriptscriptstyle \rightarrow}}{\boldsymbol{w}}:G^{d}\rightarrow G^{r}$,
defined by 
\[
\overset{{\scriptscriptstyle \rightarrow}}{\boldsymbol{w}}\left(x_{1},\ldots,x_{d}\right)=\left(w_{1}\left(\overline{x}\right),\ldots,w_{r}\left(\overline{x}\right)\right),
\]
and define 
\[
N_{\overset{{\scriptscriptstyle \rightarrow}}{\boldsymbol{w}}}\left(g_{1},\ldots,g_{r}\right)=\left|\left(\overset{{\scriptscriptstyle \rightarrow}}{\boldsymbol{w}}\right)^{-1}\left(\overline{g}\right)\right|=\sharp\left\{ \overline{x}\in G^{d}\,\middle|\,\ \overset{{\scriptscriptstyle \rightarrow}}{\boldsymbol{w}}\left(\overline{x}\right)=\overline{g}\right\} .
\]
Though $N_{\overset{{\scriptscriptstyle \rightarrow}}{\boldsymbol{w}}}$
is not a class function on $G^{r}$, we can still observe the ``Fourier
coefficient'' of $\eta\in\mathrm{Irr}\left(G^{r}\right)$,
\[
N_{\overset{{\scriptscriptstyle \rightarrow}}{\boldsymbol{w}}}^{\eta}=\left\langle N_{\overset{{\scriptscriptstyle \rightarrow}}{\boldsymbol{w}}},\eta\right\rangle =\frac{1}{\left|G\right|^{r}}\sum_{\overline{g}\in G^{r}}N_{\overset{{\scriptscriptstyle \rightarrow}}{\boldsymbol{w}}}\left(\overline{g}\right)\overline{\eta}\left(\overline{g}\right).
\]
With this notation we find that if the dismissible letter formula
for a word $w$ produces $W_{1},\ldots,W_{r}$, then for every $\chi\in\mathrm{Irr}\left(G\right)$
\begin{align*}
N_{w}^{\chi} & =\frac{\left|G\right|^{n-1}}{\chi\left(1\right)^{n}}\sum_{\overline{x}\in G^{d}}\overline{\chi}\left(W_{1}\left(\overline{x}\right)\right)\cdot\ldots\cdot\overline{\chi}\left(W_{r}\left(\overline{x}\right)\right)\\
 & =\frac{\left|G\right|^{n+r-1}}{\chi\left(1\right)^{n}}\cdot\frac{1}{\left|G\right|^{r}}\sum_{\overline{g}\in G^{r}}N_{\overset{{\scriptscriptstyle \rightarrow}}{\mathbf{W}}}\left(\overline{g}\right)\cdot\overline{\chi}\left(g_{1}\right)\cdot\ldots\cdot\overline{\chi}\left(g_{r}\right)\\
 & =\frac{\left|G\right|^{n+r-1}}{\chi\left(1\right)^{n}}\left\langle N_{\overset{{\scriptscriptstyle \rightarrow}}{\mathbf{W}}},\chi^{\otimes r}\right\rangle \\
 & =\frac{\left|G\right|^{n+r-1}}{\chi\left(1\right)^{n}}N_{\overset{{\scriptscriptstyle \rightarrow}}{\mathbf{W}}}^{\chi^{\otimes r}}.
\end{align*}
where $\overset{{\scriptscriptstyle \rightarrow}}{\mathbf{W}}=\left(W_{1},\ldots,W_{r}\right)$.
\end{enumerate}

\section{\label{sec:Preliminaries}Preliminaries}

The following proposition lists some simple facts about the Fourier
expansions of word maps.
\begin{prop}
\label{prop:misc}Let $G$ be a finite group, $w\in\mathbf{F}_{d}$,
and \textup{$\chi\in\mathrm{Irr}\left(G\right)$. }
\begin{enumerate}
\item \textup{The coefficient of $\chi$ in $N_{w}$ is given by 
\[
N_{w}^{\chi}=\frac{1}{\left|G\right|}\sum_{\overline{x}\in G^{d}}\overline{\chi}\left(w\right)=\frac{1}{\left|G\right|}\sum_{\overline{x}\in G^{d}}\overline{\chi}\left(w\left(x_{1},\ldots,x_{d}\right)\right).
\]
}
\item The Fourier expansion of $N_{w^{-1}}$ is given by $N_{w^{-1}}^{\chi}=\overline{N_{w}^{\chi}}$.
\item If $w_{1},w_{2}\in\mathbf{F}_{d}$ are words with disjoint letters
then 
\begin{equation}
N_{w_{1}\cdot w_{2}}^{\chi}=\frac{\left|G\right|}{\chi\left(1\right)}N_{w_{1}}^{\chi}N_{w_{2}}^{\chi}.\label{eq:conv-disj}
\end{equation}

\end{enumerate}
\end{prop}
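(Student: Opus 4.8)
The plan is to treat the three parts in order, with essentially all of the content sitting in part (3).

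For part (1), I would start from the fact that $N_{w}$ is a class function and that $\mathrm{Irr}\left(G\right)$ is an orthonormal basis of the space of class functions; hence the coefficient of $\chi$ in $N_{w}=\sum_{\psi}N_{w}^{\psi}\psi$ is $N_{w}^{\chi}=\left\langle N_{w},\chi\right\rangle =\frac{1}{\left|G\right|}\sum_{g\in G}N_{w}\left(g\right)\overline{\chi}\left(g\right)$. I then unfold the definition $N_{w}\left(g\right)=\sharp\left\{ \overline{x}\in G^{d}\mid w\left(\overline{x}\right)=g\right\}$: grouping the substitutions $\overline{x}$ according to the value $g=w\left(\overline{x}\right)$ converts $\sum_{g}N_{w}\left(g\right)\overline{\chi}\left(g\right)$ into $\sum_{\overline{x}\in G^{d}}\overline{\chi}\left(w\left(\overline{x}\right)\right)$, as claimed. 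For part (2), I would apply part (1) to $w^{-1}$ and use $w^{-1}\left(\overline{x}\right)=w\left(\overline{x}\right)^{-1}$ together with $\overline{\chi}\left(g^{-1}\right)=\chi\left(g\right)$; this rewrites the defining sum as $\frac{1}{\left|G\right|}\sum_{\overline{x}}\chi\left(w\left(\overline{x}\right)\right)$, which is the complex conjugate of the formula for $N_{w}^{\chi}$.

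For part (3), after relabelling I may take $w_{1}\in\mathbf{F}\left(x_{1},\ldots,x_{k}\right)$ and $w_{2}\in\mathbf{F}\left(x_{k+1},\ldots,x_{d}\right)$, so that the two blocks of letters are substituted independently. By part (1),
\[
N_{w_{1}w_{2}}^{\chi}=\frac{1}{\left|G\right|}\sum_{\overline{x}\in G^{d}}\overline{\chi}\left(w_{1}w_{2}\right)=\frac{1}{\left|G\right|}\sum_{a,b\in G}N_{w_{1}}\left(a\right)N_{w_{2}}\left(b\right)\overline{\chi}\left(ab\right),
\]
where I regroup the substitutions according to the values $a=w_{1}$ and $b=w_{2}$ (the multiplicity is the product $N_{w_{1}}\left(a\right)N_{w_{2}}\left(b\right)$ precisely because the letter sets are disjoint). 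The goal is to decouple $a$ and $b$ inside $\overline{\chi}\left(ab\right)$. The device is that $N_{w_{2}}$ is a class function, so $N_{w_{2}}\left(b\right)=N_{w_{2}}\left(gbg^{-1}\right)$ for every $g$; averaging over $g\in G$ then lets me replace $\overline{\chi}\left(ab\right)$ by $\frac{1}{\left|G\right|}\sum_{g}\overline{\chi}\left(a\,gbg^{-1}\right)$ without changing the sum.

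The crux is the Schur-averaging identity: for any irreducible character $\psi$ and any $a,b\in G$,
\[
\frac{1}{\left|G\right|}\sum_{g\in G}\psi\left(a\,gbg^{-1}\right)=\frac{\psi\left(a\right)\psi\left(b\right)}{\psi\left(1\right)}.
\]
I would prove this by letting $\rho$ afford $\psi$ and observing that $\frac{1}{\left|G\right|}\sum_{g}\rho\left(g\right)\rho\left(b\right)\rho\left(g\right)^{-1}$ commutes with all of $\rho\left(G\right)$, hence by Schur's lemma equals the scalar $\frac{\psi\left(b\right)}{\psi\left(1\right)}I$ (the scalar being read off by taking traces); multiplying by $\rho\left(a\right)$ and taking traces yields the identity. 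Applying it with $\psi=\overline{\chi}$ (also irreducible, with $\overline{\chi}\left(1\right)=\chi\left(1\right)$) turns the inner average into $\frac{\overline{\chi}\left(a\right)\overline{\chi}\left(b\right)}{\chi\left(1\right)}$, so the double sum factors:
\[
N_{w_{1}w_{2}}^{\chi}=\frac{1}{\chi\left(1\right)\left|G\right|}\Bigl(\sum_{a}N_{w_{1}}\left(a\right)\overline{\chi}\left(a\right)\Bigr)\Bigl(\sum_{b}N_{w_{2}}\left(b\right)\overline{\chi}\left(b\right)\Bigr)=\frac{\left|G\right|}{\chi\left(1\right)}N_{w_{1}}^{\chi}N_{w_{2}}^{\chi},
\]
using part (1) for each factor. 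The main obstacle is exactly this decoupling step: the formula hinges on being able to insert the conjugation-average (legitimate only because $N_{w_{2}}$ is a class function and the letters of $w_{1},w_{2}$ are disjoint) and on the Schur-averaging identity above; everything else is routine bookkeeping.
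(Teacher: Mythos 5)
Your proposal is correct, and parts (1) and (2) essentially match the paper's argument (for (2) the paper uses the reality of $N_{w^{-1}}$ and expands $N_{w}$ in characters, while you apply (1) to $w^{-1}$ and use $\overline{\chi}\left(g^{-1}\right)=\chi\left(g\right)$; both are one-line computations). For part (3), however, you take a genuinely different route. The paper writes $N_{w_{1}w_{2}}\left(g\right)=\sum_{h}N_{w_{1}}\left(h\right)N_{w_{2}}\left(h^{-1}g\right)$, expands \emph{both} factors in their full Fourier series, and invokes the orthogonality of irreducible characters under convolution, $\psi*\chi=\frac{\delta_{\psi,\chi}}{\chi\left(1\right)}\chi$ (cited from the literature), thereby obtaining the entire expansion of $N_{w_{1}w_{2}}$ in one pass. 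You instead work on the single coefficient $N_{w_{1}w_{2}}^{\chi}$: you regroup the sum from part (1) by the values of $w_{1}$ and $w_{2}$ (this is exactly where disjointness enters, just as it underlies the paper's convolution identity), then decouple $\overline{\chi}\left(ab\right)$ by a conjugation average --- legitimate because $N_{w_{2}}$ is a class function --- and apply the Schur's-lemma identity $\frac{1}{\left|G\right|}\sum_{g}\psi\left(agbg^{-1}\right)=\frac{\psi\left(a\right)\psi\left(b\right)}{\psi\left(1\right)}$, which you prove rather than cite. The two mechanisms are close cousins (the convolution identity the paper quotes is itself a consequence of Schur's lemma), but yours is self-contained and is closer in spirit to the matrix-coefficient/Peter--Weyl computations the paper uses later (second proof of Proposition \ref{prop:[w,z]} and Theorem \ref{thm:the-theorem}), whereas the paper's version is shorter given the cited fact and yields all coefficients of $N_{w_{1}w_{2}}$ simultaneously. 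One point both proofs share implicitly, which your relabelling $w_{1}\in\mathbf{F}\left(x_{1},\ldots,x_{k}\right)$, $w_{2}\in\mathbf{F}\left(x_{k+1},\ldots,x_{d}\right)$ usefully makes explicit: the counts $N_{w_{1}}$ and $N_{w_{2}}$ must be taken over substitutions in each word's own letters, not over all of $G^{d}$, or else the formula would acquire spurious factors of $\left|G\right|$.
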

\begin{proof}
$ $
\begin{enumerate}
\item It is well known that $\mathrm{Irr}\left(G\right)$ is an orthonormal
base for the space of class functions on $G$, endowed with inner
product 
\[
\left\langle f_{1},f_{2}\right\rangle =\frac{1}{\left|G\right|}\sum_{g\in G}f_{1}\left(g\right)\overline{f_{2}\left(g\right)}.
\]
Thus,
\begin{align*}
N_{w}^{\chi} & =\left\langle N_{w},\chi\right\rangle =\frac{1}{\left|G\right|}\sum_{g\in G}N_{w}\left(g\right)\overline{\chi}\left(g\right)\\
 & =\frac{1}{\left|G\right|}\sum_{g\in G}\sum_{{\overline{x}\in G^{d}\;\mathrm{s.t.}\atop w\left(x_{1},\ldots,x_{d}\right)=g}}\overline{\chi}\left(g\right)=\frac{1}{\left|G\right|}\sum_{\overline{x}\in G^{d}}\overline{\chi}\left(w\left(x_{1},\ldots,x_{d}\right)\right).
\end{align*}

\item This follows from 
\[
N_{w^{-1}}\left(g\right)=\overline{N_{w^{-1}}\left(g\right)}=\overline{N_{w}\left(g^{-1}\right)}=\sum_{\chi\in\mathrm{Irr}\left(G\right)}\overline{N_{w}^{\chi}\chi\left(g^{-1}\right)}=\sum_{\chi\in\mathrm{Irr}\left(G\right)}\overline{N_{w}^{\chi}}\chi\left(g\right).
\]

\item We recall that the convolution of two class functions is defined by
\begin{align*}
\left(f_{1}*f_{2}\right)\left(g\right) & =\frac{1}{\left|G\right|}\sum_{h\in G}f_{1}\left(h\right)f_{2}\left(h^{-1}g\right)
\end{align*}
and that for $\chi,\psi\in\mathrm{Irr}\left(G\right)$ 
\begin{equation}
\psi*\chi=\frac{\delta_{\psi,\chi}}{\chi\left(1\right)}\cdot\chi\label{eq:irr-conv}
\end{equation}
(see e.g. \cite[5.20]{Fol1995}). Using this, we see that 
\begin{align*}
N_{w_{1}\cdot w_{2}}\left(g\right) & =\sum_{h\in G}N_{w_{1}}\left(h\right)N_{w_{2}}\left(h^{-1}g\right)\\
 & =\sum_{h\in G}\left(\sum_{\chi\in\mathrm{Irr}\left(G\right)}N_{w_{1}}^{\chi}\cdot\chi\left(h\right)\right)\cdot\left(\sum_{\psi\in\mathrm{Irr}\left(G\right)}N_{w_{2}}^{\psi}\cdot\psi\left(h^{-1}g\right)\right)\\
 & =\sum_{\chi,\psi\in\mathrm{Irr}\left(G\right)}N_{w_{1}}^{\chi}\cdot N_{w_{2}}^{\psi}\cdot\left(\sum_{h\in G}\chi\left(h\right)\psi\left(h^{-1}g\right)\right)\\
 & =\sum_{\chi,\psi\in\mathrm{Irr}\left(G\right)}N_{w_{1}}^{\chi}\cdot N_{w_{2}}^{\psi}\cdot\left|G\right|\cdot\left(\chi*\psi\right)\left(g\right)\\
 & =\sum_{\chi,\psi\in\mathrm{Irr}\left(G\right)}N_{w_{1}}^{\chi}\cdot N_{w_{2}}^{\psi}\cdot\frac{\left|G\right|}{\chi\left(1\right)}\cdot\delta_{\chi,\psi}\chi\left(g\right)\\
 & =\sum_{\chi\in\mathrm{Irr}\left(G\right)}\frac{\left|G\right|}{\chi\left(1\right)}N_{w_{1}}^{\chi}N_{w_{2}}^{\chi}\chi\left(g\right).
\end{align*}

\end{enumerate}
\end{proof}
The next proposition already appears in \cite{AV10} (as Lemma $4.6$),
and it was also shown in (\ref{eq:[w,y]-from-formula}) to follow
from the formula for dismissible letters (\ref{eq:formula}). Nevertheless,
we present here two proofs for it. The first is interesting in its
own rights, and the second will be generalized to the proof of the
formula (\ref{eq:formula}), and serves as a warm-up. 
\begin{prop}
\label{prop:[w,z]}If $y$ is a letter which does not appear in $w$,
then 
\[
N_{\left[w,y\right]}^{\chi}=\frac{\left|G\right|}{\chi\left(1\right)}\left\langle N_{w}\chi,\chi\right\rangle =\frac{\left|G\right|}{\chi\left(1\right)}\sum_{\psi\in\mathrm{Irr}\left(G\right)}\left\langle \psi\chi,\chi\right\rangle N_{w}^{\psi}.
\]
\end{prop}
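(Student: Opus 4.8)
The plan is to compute $N_{[w,y]}^{\chi}$ directly from the inversion formula and to carry out the summation over the distinguished letter $y$ first, using a Schur's-lemma averaging identity. Writing $[w,y]=w\,y\,w^{-1}y^{-1}$ as a word in $\mathbf{F}\left(x_{1},\dots,x_{d},y\right)$ and applying Proposition \ref{prop:misc}(1) gives
\[
N_{[w,y]}^{\chi}=\frac{1}{\left|G\right|}\sum_{\overline{x}\in G^{d}}\sum_{y\in G}\overline{\chi}\bigl(w\,y\,w^{-1}y^{-1}\bigr).
\]
For each fixed $\overline{x}$ I put $g=w(\overline{x})$, so the inner sum becomes $\sum_{y\in G}\overline{\chi}(g\,y\,g^{-1}y^{-1})$, which depends only on $g$.

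The key step is the identity
\[
\sum_{y\in G}\psi\bigl(a\,y\,b\,y^{-1}\bigr)=\frac{\left|G\right|}{\psi(1)}\,\psi(a)\,\psi(b)\qquad\bigl(a,b\in G,\ \psi\in\mathrm{Irr}(G)\bigr).
\]
I would prove it by letting $\rho$ afford $\psi$ and observing that $T=\sum_{y}\rho(y)\rho(b)\rho(y)^{-1}$ commutes with every $\rho(h)$ (conjugation by $h$ merely reindexes the sum); by Schur's lemma $T=cI$, and taking traces gives $c=\left|G\right|\psi(b)/\psi(1)$, whence $\sum_{y}\psi(ayby^{-1})=\mathrm{tr}\bigl(\rho(a)T\bigr)=\frac{\left|G\right|}{\psi(1)}\psi(a)\psi(b)$. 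Applying this with $\psi=\overline{\chi}$ (which is again irreducible, of the same degree $\chi(1)$), $a=g$ and $b=g^{-1}$, and using $\overline{\chi}(g^{-1})=\overline{\chi(g^{-1})}=\chi(g)$, the inner sum equals $\frac{\left|G\right|}{\chi(1)}\,\overline{\chi(g)}\,\chi(g)=\frac{\left|G\right|}{\chi(1)}\left|\chi(g)\right|^{2}$.

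Substituting back and regrouping the outer sum according to the value $g=w(\overline{x})$, so that $\sum_{\overline{x}}f(w(\overline{x}))=\sum_{g\in G}N_{w}(g)f(g)$, yields
\[
N_{[w,y]}^{\chi}=\frac{1}{\chi(1)}\sum_{g\in G}N_{w}(g)\,\left|\chi(g)\right|^{2}=\frac{\left|G\right|}{\chi(1)}\,\bigl\langle N_{w}\chi,\chi\bigr\rangle,
\]
which is the first claimed equality. The second follows by expanding $N_{w}=\sum_{\psi\in\mathrm{Irr}(G)}N_{w}^{\psi}\,\psi$ and using linearity of $\langle\,\cdot\,,\chi\rangle$ in its first argument, giving $\langle N_{w}\chi,\chi\rangle=\sum_{\psi}N_{w}^{\psi}\langle\psi\chi,\chi\rangle$.

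I expect the only genuine content to be the averaging identity of the second paragraph; everything else is bookkeeping with the inversion formula and the definition of $N_{w}$. This identity --- summing over a single letter to produce a factor $\left|G\right|/\chi(1)$ together with a product of character values --- is exactly the mechanism that should generalize to peel off one dismissible letter at a time in formula (\ref{eq:formula}), so I would state and prove it in the general form (arbitrary $a,b$) reusable there, rather than specializing to $b=g^{-1}$ at the outset.
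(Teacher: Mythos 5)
Your proof is correct, and it is closest in mechanism to the second of the paper's two proofs: both integrate out the letter $y$ first, reducing the inner sum to $\frac{\left|G\right|}{\chi\left(1\right)}\left|\chi\left(w\left(\overline{x}\right)\right)\right|^{2}$ and then regrouping over $g=w\left(\overline{x}\right)$. The route to that reduction differs, though. The paper works entrywise: it expands $\chi\left(wyw^{-1}y^{-1}\right)$ into matrix coefficients of a \emph{unitary} representation and applies the Peter--Weyl relations (\ref{eq:peter-weyl}); since the inversion formula (\ref{eq:inversion-formula}) produces $\overline{\chi}$ rather than $\chi$, the paper must first argue that $N_{\left[w,y\right]}^{\chi}$ is real (via the equivalence of $\left[w,y\right]$ and $\left[y,w\right]$ under $\mathrm{Aut}\left(\mathbf{F}_{d+1}\right)$). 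Your operator-level identity $\sum_{y}\psi\left(ayby^{-1}\right)=\frac{\left|G\right|}{\psi\left(1\right)}\psi\left(a\right)\psi\left(b\right)$, proved by Schur's lemma applied to $T=\sum_{y}\rho\left(y\right)\rho\left(b\right)\rho\left(y\right)^{-1}$, is basis-free, needs no unitarity, and --- since you apply it directly to $\psi=\overline{\chi}$ --- dispenses with the reality argument entirely; this is a genuinely cleaner packaging of the same orthogonality input. One caveat on your closing remark: peeling off dismissible letters one at a time with this single lemma works only when the pair $y,y^{-1}$ is \emph{nested} inside one trace factor; after the first elimination the word splits into several character factors, and a later letter may have its two occurrences in different factors, which requires the companion contraction $\sum_{y}\mathrm{tr}\left(P\rho\left(y\right)\right)\mathrm{tr}\left(Q\rho\left(y^{-1}\right)\right)=\frac{\left|G\right|}{\dim\rho}\mathrm{tr}\left(PQ\right)$. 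The paper's entrywise bookkeeping in Theorem \ref{thm:the-theorem} handles all such crossings uniformly through the permutation $\sigma$, which is what that formulation buys. (The paper also gives a first proof, via column orthogonality and the convolution identity $f*\chi=\frac{\left\langle f,\chi\right\rangle }{\chi\left(1\right)}\chi$, which is different from yours altogether.)
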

\begin{proof}[First proof]
The column orthogonality of characters states that
\[
\sum_{\chi\in\mathrm{Irr}\left(G\right)}\chi\left(g\right)\overline{\chi}\left(h\right)=\left|C_{G}\left(g\right)\right|\delta_{g\sim h}\qquad\mathrm{where}\qquad\delta_{g\sim h}=\begin{cases}
1 & g^{G}=h^{G}\\
0 & \mathrm{else}
\end{cases}.
\]
Thus,
\begin{align*}
N_{\left[w,y\right]}\left(g\right) & =\sum_{x\in G}N_{w}\left(x\right)\cdot\sharp\left\{ y\in G|\ \left[x,y\right]=g\right\} \\
 & =\sum_{x\in G}N_{w}\left(x\right)\cdot\sharp\left\{ y\in G|\ yx^{-1}y^{-1}=x^{-1}g\right\} \\
 & =\sum_{x\in G}N_{w}\left(x\right)\cdot\left|C_{G}\left(x^{-1}\right)\right|\cdot\delta_{x^{-1}\sim x^{-1}g}\\
 & =\sum_{x\in G}N_{w}\left(x\right)\sum_{\chi\in\mathrm{Irr}\left(G\right)}\chi\left(x^{-1}\right)\overline{\chi}\left(x^{-1}g\right)\\
 & =\sum_{x\in G}N_{w}\left(x\right)\sum_{\chi\in\mathrm{Irr}\left(G\right)}\overline{\chi}\left(x^{-1}\right)\chi\left(x^{-1}g\right)\qquad\left(\mathrm{since}\: N_{\left[w,y\right]}\left(g\right)\in\mathbb{R}\right)\\
 & =\sum_{\chi\in\mathrm{Irr}\left(G\right)}\sum_{x\in G}N_{w}\left(x\right)\chi\left(x\right)\chi\left(x^{-1}g\right)\\
 & =\left|G\right|\sum_{\chi\in\mathrm{Irr}\left(G\right)}\left(N_{w}\chi*\chi\right)\left(g\right).
\end{align*}
By (\ref{eq:irr-conv}) and linearity, any class function $f$ and
$\chi\in\mathrm{Irr}\left(G\right)$ satisfy 
\[
f*\chi=\frac{\left\langle f,\chi\right\rangle }{\chi\left(1\right)}\chi,
\]
and for $f=N_{w}\chi$ this gives 
\[
N_{\left[w,y\right]}\left(g\right)=\sum_{\chi\in\mathrm{Irr}\left(G\right)}\frac{\left|G\right|}{\chi\left(1\right)}\left\langle N_{w}\chi,\chi\right\rangle \chi\left(g\right).
\]

\end{proof}

\begin{proof}[Second proof]
We shall make use of the following classical theorem about compact
groups:
\begin{fact*}[Peter-Weyl]
Let $\left\{ \rho_{s}\right\} $ be unitary representatives for the
isomorphism classes of the irreducible representations of $G$. Let
$1\le i,j\le\dim\left(\rho_{s}\right)$ and $1\le k,l\le\dim\left(\rho_{s'}\right)$.
Then 
\begin{equation}
\frac{1}{\left|G\right|}\sum_{g\in G}\rho_{s}\left(g\right)_{i,j}\overline{\rho_{s'}\left(g\right)_{k,l}}=\frac{\delta_{s,s'}\delta_{i,k}\delta_{j,l}}{\dim\left(\rho_{s}\right)}.\label{eq:peter-weyl}
\end{equation}
\textup{}
\end{fact*}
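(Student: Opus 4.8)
The plan is to derive these relations from Schur's lemma via the standard averaging trick, reading the matrix entries on the left of (\ref{eq:peter-weyl}) as coordinates of a single intertwiner. Fix the unitary irreducibles $\rho_s$ acting on $V=\mathbb{C}^{n}$ (with $n=\dim\rho_s$) and $\rho_{s'}$ acting on $V'=\mathbb{C}^{m}$ (with $m=\dim\rho_{s'}$). For an arbitrary linear map $T\colon V\to V'$ I would form the averaged operator
\[
\widetilde{T}=\frac{1}{\left|G\right|}\sum_{g\in G}\rho_{s'}\!\left(g\right)^{-1}\,T\,\rho_s\!\left(g\right).
\]
A one-line reindexing of the sum by $g\mapsto gh$ shows that $\rho_{s'}(h)^{-1}\widetilde{T}\rho_s(h)=\widetilde{T}$ for every $h\in G$, i.e. $\widetilde{T}\rho_s(h)=\rho_{s'}(h)\widetilde{T}$, so $\widetilde{T}$ is a $G$-equivariant map from $V$ to $V'$.

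Next I would invoke Schur's lemma in its two forms. If $s\neq s'$ the representations are non-isomorphic irreducibles, so the only intertwiner is $\widetilde{T}=0$. If $s=s'$, then $\widetilde{T}$ is an endomorphism of an irreducible representation over $\mathbb{C}$, hence scalar, $\widetilde{T}=\lambda\,\mathrm{Id}$; taking traces and using that conjugation by $\rho_s(g)$ preserves the trace gives $\lambda n=\operatorname{tr}\widetilde{T}=\operatorname{tr}T$, so $\lambda=\operatorname{tr}(T)/n$.

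The last step is to extract individual matrix entries by specializing $T$ to an elementary matrix. Using unitarity to write $(\rho_{s'}(g)^{-1})_{p,k}=\overline{\rho_{s'}(g)_{k,p}}$ and taking $T=E_{k,i}$ (the matrix with a single $1$ in position $(k,i)$), the $(l,j)$ entry of $\widetilde{T}$ becomes exactly $\frac{1}{\left|G\right|}\sum_{g}\rho_s(g)_{i,j}\,\overline{\rho_{s'}(g)_{k,l}}$, the quantity on the left of (\ref{eq:peter-weyl}). When $s\neq s'$ this entry vanishes, matching $\delta_{s,s'}=0$; when $s=s'$ it equals $\lambda\,\delta_{l,j}=\frac{\operatorname{tr}(E_{k,i})}{n}\,\delta_{l,j}=\frac{\delta_{i,k}\delta_{j,l}}{n}$, which is the right-hand side. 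Combining the two cases yields the asserted identity.

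I expect the only real friction to be bookkeeping rather than ideas: keeping the four indices $i,j,k,l$ correctly aligned with the rows and columns of $\widetilde{T}$, and choosing the matrix unit whose average reproduces precisely the stated combination (with the complex conjugate sitting on the $\rho_{s'}$ factor). Schur's lemma and the equivariance of $\widetilde{T}$ are the substantive inputs, and both are standard; once the trace identity $\operatorname{tr}\widetilde{T}=\operatorname{tr}T$ is in hand, the orthogonality relations follow by linearity from the single matrix-unit computation.
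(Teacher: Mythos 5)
Your proof is correct and complete. Note, though, that the paper offers no proof of this Fact at all: it is quoted as a classical theorem (attributed to Peter--Weyl, with the finite-group case being the Schur orthogonality relations for matrix coefficients) and used as a black box in the second proof of Proposition \ref{prop:[w,z]} and in Theorem \ref{thm:the-theorem}. What you have written is the standard textbook derivation: averaging $T$ to get the intertwiner $\widetilde{T}=\frac{1}{\left|G\right|}\sum_{g}\rho_{s'}\left(g\right)^{-1}T\rho_{s}\left(g\right)$, applying both halves of Schur's lemma, computing $\lambda=\operatorname{tr}\left(T\right)/n$ by the trace argument, and then specializing to $T=E_{k,i}$ so that $\widetilde{T}_{l,j}=\frac{1}{\left|G\right|}\sum_{g}\rho_{s}\left(g\right)_{i,j}\overline{\rho_{s'}\left(g\right)_{k,l}}$ via unitarity, with $\operatorname{tr}\left(E_{k,i}\right)=\delta_{i,k}$ producing exactly the right-hand side $\frac{\delta_{s,s'}\delta_{i,k}\delta_{j,l}}{\dim\left(\rho_{s}\right)}$. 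Your index bookkeeping checks out, so your argument would serve as a self-contained substitute for the paper's citation.
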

We assume $w\in\mathbf{F}_{d}=\mathbf{F}\left(x_{1},\ldots,x_{d}\right)$,
$\left[w,y\right]\in\mathbf{F}_{d+1}=\mathbf{F}\left(x_{1},\ldots,x_{d},y\right)$.
Note that $\left[w,y\right]$ and $\left[w,y\right]^{-1}=\left[y,w\right]$
are equivalent under $\mathrm{Aut}\left(\mathbf{F}_{d+1}\right)$,
so by Proposition \ref{prop:misc} $\left(2\right)$ all Fourier coefficients
of $\left[w,y\right]$ are real. Let $\rho$ be a unitary representation
which affords the character $\chi$. By (\ref{eq:inversion-formula})
we have 
\begin{align*}
N_{\left[w,y\right]}^{\chi}=\overline{N_{\left[w,y\right]}^{\chi}} & =\frac{1}{\left|G\right|}\sum_{{\overline{x}\in G^{d}\atop y\in G^{\hphantom{d}}}}\chi\left(wyw^{-1}y^{-1}\right)=\frac{1}{\left|G\right|}\sum_{{\overline{x}\in G^{d}\atop y\in G^{\hphantom{d}}}}\sum_{i=1}^{\dim\rho}\rho\left(wyw^{-1}y^{-1}\right)_{i,i}\\
 & =\frac{1}{\left|G\right|}\sum_{{\overline{x}\in G^{d}\atop y\in G^{\hphantom{d}}}}\sum_{i,j,k,\ell=1}^{\dim\rho}\rho\left(w\right)_{i,j}\rho\left(y\right)_{j,k}\rho\left(w^{-1}\right)_{k,\ell}\rho\left(y^{-1}\right)_{\ell,i}\\
 & =\frac{1}{\left|G\right|}\sum_{{\overline{x}\in G^{d}\atop y\in G^{\hphantom{d}}}}\sum_{i,j,k,\ell=1}^{\dim\rho}\rho\left(w\right)_{i,j}\rho\left(y\right)_{j,k}\overline{\rho\left(w\right)_{\ell,k}\rho\left(y\right)_{i,\ell}}\\
 & \overset{\left(*\right)}{=}\frac{1}{\dim\rho}\sum_{\overline{x}\in G^{d}}\sum_{i,j,k,\ell=1}^{\dim\rho}\rho\left(w\right)_{i,j}\overline{\rho\left(w\right)_{\ell,k}}\delta_{i,j}\delta_{k,\ell}\\
 & =\frac{1}{\dim\rho}\sum_{\overline{x}\in G^{d}}\sum_{i,k=1}^{\dim\rho}\rho\left(w\right)_{i,i}\overline{\rho\left(w\right)_{k,k}}\\
 & =\frac{1}{\chi\left(1\right)}\sum_{\overline{x}\in G^{d}}\chi\left(w\right)\overline{\chi\left(w\right)}\\
 & =\frac{1}{\chi\left(1\right)}\sum_{g\in G}N_{w}\left(g\right)\chi\left(g\right)\overline{\chi\left(g\right)}\\
 & =\frac{\left|G\right|}{\chi\left(1\right)}\left\langle N_{w}\chi,\chi\right\rangle 
\end{align*}
where $\left(*\right)$ is by Peter-Weyl.
\end{proof}

\section{\label{sec:Proof-of-the}Proofs of the formulas}

We begin by the formula for square letters.
\begin{thm}
Let $w=w_{1}yw_{2}yw_{3}\in\mathbf{F}\left(x_{1},\ldots,x_{d},y\right)$
with $y$ a square, i.e. $w_{1},w_{2},w_{3}\in\mathbf{F}\left(x_{1},\ldots,x_{d}\right)$.
Then
\[
N_{w}^{\chi}=\frac{\left|G\right|}{\chi\left(1\right)}\mathcal{FS}_{\chi}N_{w_{1}w_{2}^{-1}w_{3}}^{\chi},
\]
where $\mathcal{FS}_{\chi}$ is the Frobenius-Schur indicator (\ref{eq:FS_indicator}).\end{thm}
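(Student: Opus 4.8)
The plan is to mimic the second proof of Proposition \ref{prop:[w,z]}: expand the coefficient via the inversion formula (\ref{eq:inversion-formula}) into matrix coefficients of a unitary representation $\rho$ affording $\chi$, and then carry out the summation over the square letter $y$ using an orthogonality relation. Writing $w=w_1yw_2yw_3$ and expanding the product of five matrices, one gets
\[
N_w^{\overline\chi}=\frac{1}{|G|}\sum_{\overline x\in G^d,\,y\in G}\chi(w)=\frac{1}{|G|}\sum_{\overline x}\sum_{i,a,b,c,d}\rho(w_1)_{i,a}\,\rho(w_2)_{b,c}\,\rho(w_3)_{d,i}\Bigl(\sum_{y\in G}\rho(y)_{a,b}\rho(y)_{c,d}\Bigr),
\]
so everything reduces to evaluating the inner sum $\sum_y\rho(y)_{a,b}\rho(y)_{c,d}$. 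I deliberately expand $\chi(w)$ rather than $\overline\chi(w)$; this computes $N_w^{\overline\chi}$, but since $\mathcal{FS}_\chi=\mathcal{FS}_{\overline\chi}$ and $\overline\chi$ ranges over $\mathrm{Irr}(G)$ as $\chi$ does, the stated identity for $\chi$ will follow.

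The decisive difference from Proposition \ref{prop:[w,z]} is that here the two factors $\rho(y)$ occur with the \emph{same} sign, so both are \emph{un}-conjugated and Peter--Weyl (\ref{eq:peter-weyl}) does not apply directly; this is exactly where the Frobenius--Schur indicator must enter. The key lemma I would isolate is the ``symmetric'' orthogonality relation: if $\chi$ is not real then $\sum_y\rho(y)_{a,b}\rho(y)_{c,d}=0$ (the operator $\tfrac1{|G|}\sum_y\rho(y)\otimes\rho(y)$ is the projection onto $(V\otimes V)^G$, which is trivial precisely when $\chi\neq\overline\chi$), while if $\chi$ is real then, fixing the nondegenerate $G$-invariant bilinear form $B$ with $\rho(g)^TB\rho(g)=B$, one has
\[
\frac{1}{|G|}\sum_{y\in G}\rho(y)_{a,b}\rho(y)_{c,d}=\frac{(B^{-1})_{c,a}\,B_{b,d}}{\chi(1)}.
\]
I would prove this by writing $\rho(g)_{c,d}=\sum_{p,q}(B^{-1})_{c,p}B_{q,d}\,\overline{\rho(g)_{p,q}}$ (which follows from invariance of $B$ together with unitarity, $\rho(g^{-1})_{q,p}=\overline{\rho(g)_{p,q}}$), and then applying ordinary Peter--Weyl (\ref{eq:peter-weyl}) to the single remaining conjugated factor.

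Substituting the lemma back, the $\tfrac1{|G|}$ cancels against the $|G|$ produced by the sum over $y$, leaving $\tfrac1{\chi(1)}$. Contracting $\rho(w_2)_{b,c}$ against $(B^{-1})_{c,a}B_{b,d}$ gives $(B^T\rho(w_2)B^{-1})_{d,a}$; using that $B$ is symmetric or antisymmetric according to $\mathcal{FS}_\chi=\pm1$, i.e. $B^T=\mathcal{FS}_\chi B$, together with $B\rho(w_2)B^{-1}=\rho(w_2^{-1})^T$ (again from invariance of $B$), this contraction becomes $\mathcal{FS}_\chi\,\rho(w_2^{-1})_{a,d}$. The three surviving matrices then collapse into $\sum_i\rho(w_1w_2^{-1}w_3)_{i,i}=\chi(w_1w_2^{-1}w_3)$, and summing over $\overline x$ yields $\tfrac{|G|}{\chi(1)}\mathcal{FS}_\chi N_{w_1w_2^{-1}w_3}^{\overline\chi}$, which is the claim (for $\overline\chi$, hence for all of $\mathrm{Irr}(G)$). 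The real-orthogonal case $\mathcal{FS}_\chi=1$ is a good sanity check: taking $\rho$ orthogonal makes $B=I$, the sum over $y$ forces $c=a$ and $b=d$, and $\rho(w_2)$ is simply transposed into $\rho(w_2^{-1})$.

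The main obstacle I anticipate is the symmetric orthogonality lemma together with the careful tracking of how the (anti)symmetry of $B$ simultaneously produces the scalar $\mathcal{FS}_\chi$ and converts $w_2$ into $w_2^{-1}$; the complex-conjugation bookkeeping (handled by computing $N_w^{\overline\chi}$ and invoking the $\chi\leftrightarrow\overline\chi$ symmetry) is a minor but easy-to-botch point. Everything else is routine matrix-coefficient algebra of the same flavour as the warm-up proof of Proposition \ref{prop:[w,z]}.
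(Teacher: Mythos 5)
Your proof is correct, but it takes a genuinely different route from the paper's. The paper's own argument is three lines long: since $N_{w}$ is invariant under $\mathrm{Aut}\left(\mathbf{F}_{d+1}\right)$, a cyclic shift followed by the automorphism $y\mapsto yw_{2}^{-1}$ (fixing the $x_{i}$) turns $w=w_{1}yw_{2}yw_{3}$ into $y^{2}\cdot w_{2}^{-1}w_{3}w_{1}$, in which $y^{2}$ and $w_{2}^{-1}w_{3}w_{1}$ have disjoint letters; the convolution identity (\ref{eq:conv-disj}) then gives $N_{w}^{\chi}=\frac{\left|G\right|}{\chi\left(1\right)}N_{y^{2}}^{\chi}N_{w_{2}^{-1}w_{3}w_{1}}^{\chi}$, and quoting the classical identity $N_{y^{2}}^{\chi}=\mathcal{FS}_{\chi}$ (plus one more cyclic shift) finishes the proof. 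You instead run the matrix-coefficient computation of the second proof of Proposition \ref{prop:[w,z]}, with Peter--Weyl (\ref{eq:peter-weyl}) replaced by a ``same-sign'' orthogonality relation expressed through the invariant bilinear form $B$. Your lemma and the contraction bookkeeping are sound: $\rho\left(g\right)_{c,d}=\sum_{p,q}\left(B^{-1}\right)_{c,p}B_{q,d}\overline{\rho\left(g\right)_{p,q}}$ does follow from invariance plus unitarity, the projection onto $\left(V\otimes V\right)^{G}$ correctly kills the non-real case, the identities $B^{T}=\mathcal{FS}_{\chi}B$ and $B\rho\left(g\right)B^{-1}=\rho\left(g^{-1}\right)^{T}$ produce exactly the factor $\mathcal{FS}_{\chi}\,\rho\left(w_{2}^{-1}\right)_{a,d}$, and the $\chi\leftrightarrow\overline{\chi}$ swap at the end is legitimate since $\mathcal{FS}_{\overline{\chi}}=\mathcal{FS}_{\chi}$. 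What your route buys: it is uniform with the paper's proof of the dismissible-letter formula (Theorem \ref{thm:the-theorem}), it exhibits the mechanism by which the indicator enters (the (anti)symmetry of $B$), and it contains the Frobenius--Schur theorem itself as the degenerate case $w_{1}=w_{2}=w_{3}=1$, since your lemma gives $N_{y^{2}}^{\chi}=\frac{1}{\chi\left(1\right)}\mathrm{tr}\left(B^{-1}B^{T}\right)=\mathcal{FS}_{\chi}$. What it costs: length, and it is not actually more elementary, because the step ``$B$ is symmetric or antisymmetric according to whether $\chi$ is afforded by a real representation'' is the same classical input that the paper imports by citing $\mathcal{FS}_{\chi}=N_{x^{2}}^{\chi}$ from Serre --- you have merely unwound that citation into its proof. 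One cosmetic remark: you use $d$ both for the number of letters $x_{1},\ldots,x_{d}$ and as a matrix index in $\rho\left(y\right)_{c,d}$; rename one of them.
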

\begin{proof}
First, a word is always equivalent to its cyclic shifts under $\mathrm{Aut}\left(\mathbf{F}_{d}\right)$,
so that $N_{w}\equiv N_{yw_{2}yw_{3}w_{1}}$. Now applying the automorphism
$x_{i}\mapsto x_{i}$, $y\mapsto yw_{2}^{-1}$, we obtain $N_{w}\equiv N_{y^{2}w_{2}^{-1}w_{3}w_{1}}$.
Using (\ref{eq:conv-disj}) we have
\[
N_{w}^{\chi}=N_{y^{2}w_{2}^{-1}w_{3}w_{1}}^{\chi}=\frac{\left|G\right|}{\chi\left(1\right)}N_{y^{2}}^{\chi}N_{w_{2}^{-1}w_{3}w_{1}}^{\chi}=\frac{\left|G\right|}{\chi\left(1\right)}\mathcal{FS}_{\chi}N_{w_{1}w_{2}^{-1}w_{3}}^{\chi}
\]
where we recall that $\mathcal{FS}_{\chi}=N_{y^{2}}^{\chi}$ \cite[Proposition 39]{serre1977linear}.
\end{proof}
We move on to the formula for dismissible letters. We take $w\in\mathbf{F}\left(x_{1},\ldots,x_{d},y_{0},\ldots,y_{n-1}\right)$
to be a word with the $y_{i}$ dismissible, and assume (by applying
a cyclic shift if necessary) that $w$ ends in some $y_{i}^{\pm}$.
\begin{thm}
\label{thm:the-theorem}Let $w_{0},\ldots,w_{2n-1}\in F\left(x_{1},\ldots,x_{d}\right)$,
and $y_{0},\ldots,y_{n-1}$ be new letters. Let $z_{0},\ldots,z_{2n-1}$
be a permutation of $y_{0},\ldots,y_{n-1},y_{0}^{-1},\ldots,y_{n-1}^{-1}$,
and let 
\[
w=w_{0}z_{0}w_{1}z_{1}\cdots w_{2n-1}z_{2n-1}.
\]
Define a permutation $\tau\in Sym\left\{ 0,\ldots,2n-1\right\} $
by $\tau\left(i\right)=j$ iff $z_{i}=z_{j}^{-1}$, and $\sigma\in Sym$$\left\{ 0,\ldots,2n-1\right\} $
by $\sigma=\left(0\:1\:2\ldots\:2n\!-\!1\right)\circ\tau$ (i.e. $\sigma\left(k\right)=\tau\left(k\right)+1\ \left(\mathrm{mod}\ 2n\right)$).
Let $\sigma=\prod_{s=1}^{r}\left(\sigma_{1}^{s}\:\sigma_{2}^{s}\ldots\:\sigma_{m_{s}}^{s}\right)$
be the decomposition of $\sigma$ into disjoint cycles. Then $r\not\equiv n\left(\mathrm{mod}\,2\right)$,
and the Fourier expansion of $N_{w}$ is given by 
\[
N_{w}^{\chi}=\frac{\left|G\right|^{n-1}}{\chi\left(1\right)^{n}}\sum_{\left(x_{1},\ldots,x_{d}\right)\in G^{d}}\Bigg(\prod_{i=1}^{r}\overline{\chi}\Bigg(\underbrace{\prod\nolimits _{j=1}^{m_{i}}w_{\sigma_{j}^{i}}}_{W_{i}}\Bigg)\Bigg).
\]
\end{thm}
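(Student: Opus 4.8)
The plan is to follow the Peter--Weyl strategy of the second proof of Proposition \ref{prop:[w,z]}, which is precisely the $n=1$ case of this statement. First I would invoke the cyclic-shift normalization already made in the preamble; this is legitimate since $N_{w}$ depends only on the $\mathrm{Aut}$-class of $w$ and cyclic shifts are realized by inner automorphisms (as used for the square-letter formula), so I may assume $w=w_{0}z_{0}w_{1}z_{1}\cdots w_{2n-1}z_{2n-1}$ ends in a dismissible letter and all $2n$ slots are genuinely present. Let $\rho$ be a unitary representation affording $\overline{\chi}$, so that $\dim\rho=\chi(1)$. Starting from the inversion formula (\ref{eq:inversion-formula}) I would write $\overline{\chi}(w)=\sum_{i}\rho(w)_{i,i}$ and expand $\rho(w)=\rho(w_{0})\rho(z_{0})\cdots\rho(w_{2n-1})\rho(z_{2n-1})$ entrywise, introducing a left index $i_{k}$ and a right index $j_{k}$ for each factor $\rho(w_{k})$, so that $\rho(z_{k})$ carries indices $j_{k},i_{k+1}$ (indices read cyclically mod $2n$).

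Next I would carry out the summation over the $n$ group elements substituted for $y_{0},\ldots,y_{n-1}$, one dismissible pair at a time. For the pair of positions $\{k,\tau(k)\}$, on which $w$ carries $\rho(y_{m})$ and $\rho(y_{m}^{-1})=\rho(y_{m})^{*}$, the two relevant matrix coefficients combine into $\rho(y_{m})_{j_{k},i_{k+1}}\,\overline{\rho(y_{m})_{i_{\tau(k)+1},j_{\tau(k)}}}$, and Peter--Weyl (\ref{eq:peter-weyl}) gives
\[
\frac{1}{|G|}\sum_{y_{m}\in G}\rho(y_{m})_{j_{k},i_{k+1}}\,\overline{\rho(y_{m})_{i_{\tau(k)+1},j_{\tau(k)}}}=\frac{1}{\chi(1)}\,\delta_{j_{k},\,i_{\tau(k)+1}}\,\delta_{i_{k+1},\,j_{\tau(k)}}.
\]
Since $\tau$ is an involution, the second Kronecker delta is just the first one read from the partner position, so the net effect of all $n$ contractions is the single family of identifications $j_{k}=i_{\tau(k)+1}=i_{\sigma(k)}$, together with an overall factor $\bigl(|G|/\chi(1)\bigr)^{n}$.

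The heart of the argument is then bookkeeping: the relation $j_{k}=i_{\sigma(k)}$ glues the right index of $\rho(w_{k})$ to the left index of $\rho(w_{\sigma(k)})$, so after eliminating every $j_{k}$ the surviving factors $\rho(w_{k})_{i_{k},\,i_{\sigma(k)}}$ thread themselves along the cycles of $\sigma$; summing the remaining free indices $i_{0},\ldots,i_{2n-1}$ collapses each cycle $(\sigma_{1}^{i}\cdots\sigma_{m_{i}}^{i})$ into a trace $\sum\rho\bigl(w_{\sigma_{1}^{i}}\cdots w_{\sigma_{m_{i}}^{i}}\bigr)_{\cdot,\cdot}=\overline{\chi}(W_{i})$. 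Collecting the prefactor $\tfrac{1}{|G|}$ from (\ref{eq:inversion-formula}) and $\bigl(|G|/\chi(1)\bigr)^{n}$ from the contractions yields exactly the claimed formula. I expect this index tracking to be the main obstacle, since one must verify that the $n$ Peter--Weyl deltas are mutually consistent and eliminate precisely the $2n$ right indices without clashing, leaving the $2n$ left indices to organize exactly into the cycles of $\sigma$; setting it up abstractly (there are $2n$ labels on which $\sigma$ acts) avoids any case analysis.

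Finally, the parity statement $r\not\equiv n\pmod 2$ is a clean sign computation, independent of the analytic part. Writing $\sigma=c\circ\tau$ with $c=(0\,1\,\cdots\,2n{-}1)$ a single $2n$-cycle and $\tau$ a product of $n$ transpositions, one has $\sgn(\sigma)=\sgn(c)\,\sgn(\tau)=(-1)^{2n-1}(-1)^{n}=(-1)^{n+1}$. On the other hand, a permutation of a $2n$-element set with $r$ disjoint cycles (counting fixed points) has sign $(-1)^{2n-r}=(-1)^{r}$. Equating gives $(-1)^{r}=(-1)^{n+1}$, i.e.\ $r\equiv n+1\pmod 2$, as asserted. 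Note this requires the cycle decomposition of $\sigma$ to include its fixed points as $1$-cycles, which is exactly the convention under which each fixed point $k$ contributes the length-one factor $W_{i}=w_{k}$.
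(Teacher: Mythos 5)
Your proposal is correct and follows essentially the same route as the paper's own proof: expand $\chi(w)$ into matrix coefficients of a unitary representation, contract each dismissible pair via Peter--Weyl to get Kronecker deltas identifying indices along $\sigma=(0\,1\,\cdots\,2n{-}1)\circ\tau$, collapse the cycles of $\sigma$ into traces $\overline{\chi}(W_i)$, and deduce the parity claim from $\sgn\sigma=-\sgn\tau$. The only cosmetic difference is that you let $\rho$ afford $\overline{\chi}$ and compute $N_w^\chi$ directly, whereas the paper lets $\rho$ afford $\chi$ and computes $\overline{N_w^\chi}$; these are equivalent.
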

\begin{proof}
We strongly advise reading the second proof of Proposition \ref{prop:[w,z]}
before going any further, as it is much more accessible and contains
the main ideas of the proof. 

First, we observe that 
\[
\left(-1\right)^{r}=\left(-1\right)^{2n-r}=\sgn\sigma=-\sgn\tau=-\left(-1\right)^{n}
\]
and thus $r\not\equiv n\left(\mathrm{mod}\,2\right)$. We shall write
$i\boxplus j$ for $\left(i+j\right)\mathrm{mod}\,2n$. We shall use
the fact, already exploited in the second proof of Proposition \ref{prop:[w,z]},
that for $\rho\in\widehat{G}$ and $1\le i,j,k,l\le\dim\rho$, 
\begin{equation}
\sum_{x\in G}\rho\left(y\right)_{i,j}\rho\left(y^{-1}\right)_{k,l}=\frac{\left|G\right|}{\dim\rho}\delta_{i,l}\delta_{j,k},\label{eq:lemma_delta}
\end{equation}
(this follows from $\rho\left(y^{-1}\right)_{k,l}=\overline{\rho\left(y\right)_{l,k}}$
and the Peter-Weyl Theorem). By (\ref{eq:inversion-formula}) we have
\begin{align*}
\overline{N_{w}^{\chi}\left(g\right)} & =\frac{1}{\left|G\right|}\sum_{{x_{1},\ldots,x_{d}\in G\atop y_{0},\ldots,y_{n-1}\in G}}\chi\left(\prod_{j=0}^{2n-1}w_{j}z_{j}\right)=\frac{1}{\left|G\right|}\sum_{{x_{1},\ldots,x_{d}\in G\atop y_{0},\ldots,y_{n-1}\in G}}\sum_{i=1}^{\dim\rho}\rho\left(\prod_{j=0}^{2n-1}w_{j}z_{j}\right)_{i,i}=\\
 & =\frac{1}{\left|G\right|}\sum_{{x_{1},\ldots,x_{d}\in G\atop y_{0},\ldots,y_{n-1}\in G}}\sum_{{k_{0},\ldots,k_{2n-1}=1\atop l_{0},\ldots,l_{2n-1}=1}}^{\dim\rho}\prod_{j=0}^{2n-1}\rho\left(w_{j}\right)_{l_{j},k_{j}}\prod_{j=0}^{2n-1}\rho\left(z_{j}\right)_{k_{j},l_{j\boxplus1}}\\
 & =\frac{\left|G\right|^{n-1}}{\chi\left(1\right)^{n}}\sum_{x_{1},\ldots,x_{d}\in G}\sum_{{k_{0},\ldots,k_{2n-1}=1\atop l_{0},\ldots,l_{2n-1}=1}}^{\dim\rho}\prod_{j=0}^{2n-1}\rho\left(w_{j}\right)_{l_{j},k_{j}}\prod_{j=0}^{2n-1}\delta_{k_{j},l_{\tau\left(j\right)\boxplus1}}\delta_{l_{j\boxplus1},k_{\tau\left(j\right)}}\quad\left(\mathrm{by\:}\eqref{eq:lemma_delta}\right)\\
 & =\frac{\left|G\right|^{n-1}}{\chi\left(1\right)^{n}}\sum_{x_{1},\ldots,x_{d}\in G}\sum_{{k_{0},\ldots,k_{2n-1}=1\atop l_{0},\ldots,l_{2n-1}=1}}^{\dim\rho}\prod_{j=0}^{2n-1}\rho\left(w_{j}\right)_{l_{j},k_{j}}\prod_{j=0}^{2n-1}\delta_{k_{j},l_{\sigma\left(j\right)}}\quad\left({\mathrm{since}\;\tau\left(j\right)\boxplus1=\sigma\left(j\right)\atop \mathrm{and}\;\tau=\tau^{-1}}\right)\\
 & =\frac{\left|G\right|^{n-1}}{\chi\left(1\right)^{n}}\sum_{x_{1},\ldots,x_{d}\in G}\sum_{l_{0},\ldots,l_{2n-1}=1}^{\dim\rho}\prod_{j=0}^{2n-1}\rho\left(w_{j}\right)_{l_{j},l_{\sigma\left(j\right)}}\\
 & =\frac{\left|G\right|^{n-1}}{\chi\left(1\right)^{n}}\sum_{x_{1},\ldots,x_{d}\in G}\sum_{l_{0},\ldots,l_{2n-1}=1}^{\dim\rho}\prod_{s=1}^{r}\prod_{t=1}^{m_{s}}\rho\left(w_{\sigma_{t}^{s}}\right)_{l_{\sigma_{t}^{s}},l_{\sigma\left(\sigma_{t}^{s}\right)}}\quad\left(\mathrm{since}\;\left\{ \sigma_{t}^{s}\right\} _{{s=1..r\atop t=1..m_{s}}}=\left\{ 1\ldots n\right\} \right)\\
 & =\frac{\left|G\right|^{n-1}}{\chi\left(1\right)^{n}}\sum_{x_{1},\ldots,x_{d}\in G}\sum_{l_{0},\ldots,l_{2n-1}=1}^{\dim\rho}\prod_{s=1}^{r}\prod_{t=1}^{m_{s}}\rho\left(w_{\sigma_{t}^{s}}\right)_{l_{\sigma_{t}^{s}},l_{\sigma_{t+1\left(\mathrm{mod}\, m_{s}\right)}^{s}}}\\
 & =\frac{\left|G\right|^{n-1}}{\chi\left(1\right)^{n}}\sum_{x_{1},\ldots,x_{d}\in G}\sum_{l_{\sigma_{0}^{1}},l_{\sigma_{0}^{2}},\ldots,l_{\sigma_{0}^{r}}=1}^{\dim\rho}\prod_{s=1}^{r}\rho\left(\prod_{t=1}^{m_{s}}w_{\sigma_{t}^{s}}\right)_{l_{\sigma_{0}^{s}},l_{\sigma_{0}^{s}}}\\
 & =\frac{\left|G\right|^{n-1}}{\chi\left(1\right)^{n}}\sum_{x_{1},\ldots,x_{d}\in G}\prod_{s=1}^{r}\chi\left(\prod_{t=1}^{m_{s}}w_{\sigma_{t}^{s}}\right).
\end{align*}

\end{proof}
\bibliographystyle{amsalpha}
\bibliography{/home/math/parzan/Math/BibTex}

\end{document}